\newtheorem{theorem}{Theorem}[section]
\newtheorem{corollary}[theorem]{Corollary}
\newtheorem{proposition}[theorem]{Proposition}
\newtheorem{example}[theorem]{Example}
\newtheorem{definition}[theorem]{Definition}
\journal{arXiv}
\begin{document}

\begin{frontmatter}



\title{Approximation of  almost diagonal non-linear maps by  lattice  Lipschitz  operators}


\author{Roger Arnau, Jose M. Calabuig, Ezgi Erdo\u{g}an, Enrique A. S\'{a}nchez P\'{e}rez}

%

\address{ Roger Arnau, Jose M. Calabuig,
        Enrique \ A.\ S\'{a}nchez P\'{e}rez (Corresponding author).
              Instituto Universitario de Matem\'atica Pura y Aplicada,
              Universitat Polit\`ecnica de Val\`encia, Camino de Vera s/n, 46022
              Valencia, Spain.  ararnnot@posgrado.upv.es, jmcalabu@mat.upv.es, easancpe@mat.upv.es}

\address{Ezgi Erdo\u{g}an.
              Department of Mathematics, Faculty of Art and Science, University of Marmara, 34722, Kad\i k\"{o}y, Istanbul, Turkey. ezgi.erdogan@marmara.edu.tr}

\begin{abstract}
Lattice Lipschitz operators define a new class of nonlinear Banach-lattice-valued maps that can be written as diagonal functions with respect to a certain basis. In the $n-$dimensional case, such a map can be represented as a vector of size $n$ of real-valued functions of one variable. In this paper we develop a method  to approximate almost diagonal maps by means of lattice Lipschitz operators. The proposed technique is based on the approximation properties and error bounds obtained for these operators, together with a pointwise version of the interpolation of McShane and Whitney extension maps that can be applied to almost diagonal functions. In order to get the desired approximation, it is necessary to previously obtain an approximation to the set of eigenvectors of the original function. We focus on the explicit computation of error formulas and on illustrative  examples to present our construction. 
\end{abstract}



\begin{keyword}
  Lattice \sep non-linear map \sep Normed space  \sep Diagonalisation


 \MSC 47J10 \sep 41A65 \sep 65D15

\end{keyword}

\end{frontmatter}


\section{Introduction and notation}

Diagonalizable operators play a fundamental role in classical operator theory. Indeed, they have special geometric features that allow the mathematical treatment of properties in many fields of science, and can be used to approximate linear operators in infinite-dimensional Banach spaces. However, there is no analog for the case of Lipschitz maps in Euclidean spaces, although there are many situations in which both classes (linear and Lipschitz maps) can be naturally associated. 

Several authors have recently been interested in the determination and characterization of classes of operators for which the notion of diagonalization could be adapted. To do this, one must begin by giving a definition of eigenvector that can make sense in the specific context in which one wishes to investigate.  Nonlinear spectral theory is already a classical subject and has a long development, both from a theoretical and practical point of view (\cite{appell,dan,furi}).  For example, for the case of multilinear mappings \cite{dasilva1,dasilva2,erd1,mil}, polynomials \cite{mac} or Lipschitz maps \cite{erd2}. 
In particular, we have studied suitable versions of these notions for Lipschitz maps in \cite{arn,erd2}. 

Our aim in the present paper is to adapt for the case of lattice Lipschitz operators the classical  extension/representation of  diagonalizable linear operators  once a basis of eigenvectors is known.  The idea is to use the same representation pattern (but in an approximate form) for the case of functions that are approximable by lattice Lipschitz maps, which is a broad class of functions containing linear operators. As we will explain, our method is based on the use of the order in the Euclidean lattice, rather than linearity. 

Thus, the class of lattice Lipschitz operators has recently been introduced to fill the gap in the helpful definition of diagonalizable (linear) operators for the case of Lipschitz \cite{arn} maps. Using the results presented in the aforementioned paper, we explain here an approximation method for ``almost diagonalizable" Lipschitz operators, i.e., Lipschitz functions that can be approximately computed as lattice Lipschitz maps.
In this paper we  provide specific tools for the (approximate) representation of general Lipschitz maps by means of their eigenvectors using the vector-valued lattice versions of the classical McShane and Whitney formulas that were obtained in \cite{arn}.

Although some theoretical statements will also be presented, this paper is written from the point of view of applied mathematics, with the idea of providing an efficient algorithm for the approximation of Lipschitz maps with the requirements, as we have said, of being ``almost diagonal''.  An explanation of the ``initial class" of lattice Lipschitz operators, together with some clarifying examples, are given in Section \ref{sub1}.   General results on error bounds for such approximation can be found in \cite{arn}, but we will go further in this direction in  Section \ref{sec:approx_eigenvectors} of the present paper, which is mainly devoted to the approximate calculation of a ``sufficient" subset of eigenvectors. As will be explained, our method is based on the McShane-Whitney extension which is based on the order of the studied operator restricted to that subset of eigenvectors, as if it were a lattice Lipschitz operator. Once such a set is obtained, we present in Section \ref{sec:order} how to define a suitable order in Euclidean space that allows the use of McShane-Whitney expressions. Sections \ref{S4} and \ref{S4.3} are devoted to give the representation formulas and the final computational algorithms. In Section \ref{S5} we show an illustrative example, and in Section \ref{con} we give some final comments. The algorithm used to compute a set of approximate eigenvectors, programmed in R, is given in Appendix \ref{app:R_code}.

We will use the notation of general topology and mathematical analysis.
We will write  $\mathbb{R}$ for the  set of the real numbers endowed with its standard Euclidean distance. If $(M,d)$ and  $(D,\rho)$ are  metric spaces, we say that 
a map $T: M \to D$ is Lipschitz if there is a constant $K>0$ such that $\rho \big(T(a),T(b) \big) \le K \, d \big( a,b \big)$ for all $a,b \in D$ \cite{cobzas}. The Lipschitz constant of $T$ is the infimum of all such constants $K.$ 

We will center our attention on Lipschitz-type functions between Euclidean spaces $E$. Recall that the classical McShane-Whitney Theorem (\cite{cobzas,mc, wh}) establishes that, if $S$  is a  subset  of a metric space $(D,d)$ and $T : S \to   \mathbb{R}$ is a Lipschitz function with  Lipschitz constant $K$,  we can  always find an extension to $D$ with the same Lipschitz constant. 
 There are two classical ways of computing such an extension, that are provided by the formulas
$$
{{T}^M}(b)=\sup_{}\Big\{T(a)-K\,d(b,a): \, a\in S \Big\}, \quad b \in D, \quad \text{(McShane)}
$$

$$
 T^W(b) =\inf_{}\Big\{T(a)+K\, d (b,a): \, a\in S \Big\}, \quad b \in D, \quad \text{(Whitney)}.
$$
Suitable versions of such formulas, adapted for Lipschitz operators $T:E \to E,$ will be the main computational  tools in this paper. 

Given a Euclidean space $E$ of dimension $n$ and a basis in it,   we can always define an associated order in $E$ that is provided by the  coordinate-wise ordering of the vectors  (two vectors $x=(x_1,...,x_n), y=(y_1,...,y_n) \in E$ are ordered,
$x \le y,$ if  $x_i \le y_i$ for $i=1,...,n$). This gives, when the 2-norm of the coordinates is considered, a Banach lattice structure $(E, \| \cdot\|, \le).$ From this point of view, each vector represented by its coordinates in the chosen basis can be considered as  a function $f,$ $\{1,...,n\} \ni w \mapsto f(w)=x_w.$ The standard symbols $\vee$ and $\wedge$ are used for the maximum and the minimum of two (or several) vectors in the lattice, respectively.

\section{Lattice Lipschitz operators} \label{sub1}

As has been shown in \cite{arn}, the operators belonging to the special class of Lipschitz maps, that are called lattice Lipschitz operators,  have a relevant property which motivates the method we propose in this paper.
\\ 

\textbf{Property.} \textit{Each lattice Lipschitz operator $T$ can be written as the McShane extension (equivalently, the Whitney extension) of the restriction $T|_{R(\mathcal B)}$  of the operator itself to the set of the rays $R(\mathcal B)$ generated by a basis $\mathcal B$ of eigenvalues of $T.$}
\\

 Although we cannot expect all Lipschitz maps to behave like lattice Lipschitz operators, we can use this specific class as an approximation family. The idea is to find suitable extensions for each (approximated) eigenvector basis we can obtain for $E,$ and mix them in a common structure to find good approximations to such a Lipschitz operator. 

On the other hand, the concept of diagonalizable linear operator on finite dimensional Euclidean spaces can be extended to the setting of Lipschitz maps by means of the notion of lattice Lipschitz operator. Essentially, these are Lipschitz maps that can be written as combination of real valued Lipschitz functions that works independently in the directions of a given basis of the space. The main characterizations and results regarding this class or maps have been extensively studied in \cite{arn}. In what follows we recall the main technical definitions and results, as well as some illustrative examples. 

Let $E$ be a finite dimensional linear space $\mathbb R^n.$
Let $\mathcal B$ be a fixed basis of $E.$ We will consider the order $\le$ provided by the pointwise order of the coordinates of the vectors of $E$ in the basis $\mathcal B.$ As usual, using the coordinate representation given by $\mathcal B,$ every vector in $x=(\alpha_1,...,\alpha_n)  \in E$ can be understood as a function $x: \{1,...,n\} \to \mathbb R,$ $x(w)=\alpha_w, $ $w \in \{1,...,n\}= \Omega.$

In this setting, we recall the definition of our main tool.

\begin{definition} (Definition  1 in  \cite{arn})
Let $E_0$ be a subset of $E.$ A Lipschitz operator $T:E_0 \to E$ is lattice Lipschitz (with respect to the order $\le$ associated to the basis $\mathcal B$)  if  there is a  function $K:\Omega \to \mathbb R^+$ such that for every $x,y \in E_0,$
\begin{equation}
	\label{eq:lattice_lips_def}
	\big|T(x)-T(y)\big|(w) \le K(w) \big|x-y\big|(w), \quad w \in \Omega.
\end{equation}
The pointwise infimum of the functions $K$ in the inequality above is called the associate function to $T.$
\end{definition}

\vspace{0.5cm}

As we will see below, lattice Lipschitz operators can be identified with another class of functions  that allows a geometric description.
A Lipschitz  operator $T: E_0 \to E$ is \emph{diagonal} with respect to a basis $\mathcal B = \{ x_1, x_2, \ldots, x_n\}$ of $E = \mathbb R^n$ if there exist real functions  $f_i : \mathbb R \to \mathbb R$ for $1 \leq i \leq n$ such that
	\begin{equation}
		\label{eq:diagonal_representation}
		T\Big( \sum_{i=1}^n \alpha_i x_i \Big) = \sum_{i=1}^n f_i(\alpha_i) x_i, \qquad \alpha_1, \alpha_2, \ldots, \alpha_n \in \mathbb R.
	\end{equation}
We call the functions  $f_i$ the \emph{coordinate functions} of $T$ with respect to the basis $\mathcal B.$

\vspace{0.5cm}

\begin{example} \label{exdiag1}
Let us give an example of diagonal map.
Consider the function $S: B_{\mathbb R^2} \to \mathbb R^2$ given by
$$
S(x,y)= (x^2 + y^2, 2 xy ), \quad (x,y) \in B_{\mathbb R^2}.
$$
Both the coordinates in the domain $(x,y)$ and in the range are assumed to be with respect to the canonical basis of $\mathbb R^2.$
To find the eigenvectors and the eigenvalue functions, just notice that we have to find the vectors
$(x,y)$ such that $S(x,y)$ and $(x,y)$ are linearly dependent.  This means that
$$
\left|
\begin{array}{cc}
x^2+y^2 & 2 x y \\
x & y
\end{array}
\right|=0,
$$
that is, $y (x^2+ y^2)= 2 x^2 y$, what leads to the solutions
$$
y=0 \, \,\, \text{with all} \, \,\, x \in \mathbb R, \quad  \text{or} \quad y^2=x^2,
$$
and so, the set of eigenvectors is
$$
\{(t,0): |t| \le 1\} \cup \{ (t,t):  |t| \le 1/\sqrt 2\}  \cup \{ (t,-t): |t| \le 1/\sqrt 2\}.
$$
Consider the basis for $\mathbb R^2$ given by $\mathcal B= \{(1,1),(1,-1) \}.$ Then
we can consider the change or coordinates provided by $(x,y)= z(1,1)+ v(1,-1),$ what gives
$$
x=z+v, \quad y=z-v,
$$
and so 
$$
z= \frac{x+y}{2}, \quad v= \frac{x-y}{2}.
$$
In this new basis, we can write the map $S$ as
$$
S \big((z(1,1)+v(1,-1) \big)= S(x,y)= \big( (z+v)^2 + (z-v)^2, 2 (z+v)(z-v) \big)
$$
$$
=  \big( 2z^2+ 2v^2, 2 (z^2-v^2) \big) = 2 z \cdot z \, (1,1) + 2v \cdot v \,(1,-1).
$$
Consequently, $S$ is a diagonal map defined by the Lipschitz eigenvalue functions (that is, the functions that provide the eigenvalue associated to a given eigenvector) $e_1 \big( z(1,1) \big)= 2 z$ and  $e_2 \big( v(1,-1) \big)= 2 v.$
The coordinate functions are $f_1(z)= 2 z^2$ and $f_2(v)=2 v^2,$ that are also Lipschitz in the domain.
An easy computation taking into account that the function is defined on $B_{\mathbb R^2}$ gives that the associate function is $K(1)=K(2)=2.$

However, note that $S$ is not diagonal with respect to the basis $\mathcal D= \{(1,0), (1,1)\}.$ Indeed, to be diagonal would mean that
for the change of coordinates provided by $(x,y)= z(1,0)+ t(1,1),$ we should have a diagonal representation. But note that
$$
S(z(1,0)+ t(1,1))= S(x,y)= S(z+t,t) 
$$
$$
= \big( (z+t)^2+t^2, 2(z+t)t \big)= ( z^2 + 2 t^2+2zt, 2zt + 2 t^2).
$$
Since $( z^2 + 2 t^2+2zt, 2zt + 2 t^2)$ cannot be written as $s_1(z) (1,0) + s_2(t) (1,1),$ we have that $S$ is not diagonal with respect to $\mathcal D.$
\end{example}

\vspace{0.5cm}

Next result establishes the identification among diagonal and lattice Lipschitz operators.

\begin{theorem}  \label{coorwise_iff}  (Theorem 3 in  \cite{arn})
	A Lipschitz  operator  $T : E \to E$  is lattice Lipschitz (with respect to a certain order $\le$) with associate function $K : \Omega \to \mathbb R$ if and only if $T$ is diagonal with respect to the basis $\mathcal B$ (associated to the order $\le$) with   Lipschitz constant functions $K(i),$ $i=1,...,n.$
\end{theorem}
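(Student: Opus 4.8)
The plan is to prove both implications by unwinding the two definitions coordinate-by-coordinate in the basis $\mathcal B$, using the identification of a vector $x \in E$ with its coordinate function $x : \Omega \to \mathbb R$. Throughout, write $x = \sum_{i=1}^n \alpha_i x_i$ so that $x(i) = \alpha_i$, and similarly $y(i) = \beta_i$; the order $\le$ and the absolute value $|\cdot|$ are the coordinatewise ones attached to $\mathcal B$.

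First I would prove that diagonal implies lattice Lipschitz. Assume $T$ is diagonal with coordinate functions $f_1, \dots, f_n$, each $f_i$ Lipschitz with constant $K(i)$. Then $T(x) = \sum_i f_i(\alpha_i) x_i$ and $T(y) = \sum_i f_i(\beta_i) x_i$, so $(T(x) - T(y))(i) = f_i(\alpha_i) - f_i(\beta_i)$ for each $i \in \Omega$. Taking absolute values and applying the scalar Lipschitz estimate for $f_i$ gives
\begin{equation*}
\big| T(x) - T(y) \big|(i) = \big| f_i(\alpha_i) - f_i(\beta_i) \big| \le K(i) \, |\alpha_i - \beta_i| = K(i) \, \big| x - y \big|(i), \qquad i \in \Omega,
\end{equation*}
which is exactly \eqref{eq:lattice_lips_def}. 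Moreover, since $K(i)$ is the Lipschitz constant of $f_i$, one checks that this $K$ is the pointwise-smallest such function (if some $K'(i) < K(i)$ worked, it would contradict optimality of the Lipschitz constant of $f_i$, evaluated along vectors differing only in the $i$-th coordinate), so $K$ is the associate function. A small point to mention is that $T$ is genuinely Lipschitz as a map $E \to E$: this follows by combining the coordinatewise bounds with equivalence of the $2$-norm and the sup-norm on $\mathbb R^n$, with overall constant controlled by $\max_i K(i)$.

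For the converse, assume $T$ is lattice Lipschitz with associate function $K$. The key observation is that \eqref{eq:lattice_lips_def} forces the $i$-th coordinate of $T(x)$ to depend only on the $i$-th coordinate of $x$: if $x$ and $y$ have $\alpha_i = \beta_i$ but differ elsewhere, then $|T(x) - T(y)|(i) \le K(i)\,|x-y|(i) = 0$, hence $(T(x))(i) = (T(y))(i)$. Therefore the assignment $\alpha_i \mapsto (T(\alpha_i x_i))(i)$ is well defined; call it $f_i : \mathbb R \to \mathbb R$, and it follows immediately that $(T(x))(i) = f_i(\alpha_i)$ for every $x$, i.e. $T(\sum_i \alpha_i x_i) = \sum_i f_i(\alpha_i) x_i$, which is the diagonal representation \eqref{eq:diagonal_representation}. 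Applying \eqref{eq:lattice_lips_def} once more to vectors $x = \alpha_i x_i$ and $y = \beta_i x_i$ differing only in coordinate $i$ yields $|f_i(\alpha_i) - f_i(\beta_i)| \le K(i)\,|\alpha_i - \beta_i|$, so $f_i$ is Lipschitz with constant $K(i)$ (and this constant is attained in the sense that no smaller one works, since $K$ is the associate function). This closes the equivalence.

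The argument is essentially routine once the right bookkeeping is set up; the only place requiring a little care — and the step I would highlight — is the well-definedness argument in the converse direction, namely that the coordinatewise Lipschitz inequality with a zero right-hand side genuinely decouples the coordinates. Everything else is a direct translation between \eqref{eq:diagonal_representation} and \eqref{eq:lattice_lips_def}, plus the standard remark that coordinatewise Lipschitz bounds are equivalent, up to dimensional constants, to being Lipschitz for the Euclidean norm, so no issue arises in matching "$T$ Lipschitz" on the two sides.
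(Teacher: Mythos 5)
Your proof is correct. Note that the paper does not reprove this statement --- it is quoted verbatim as Theorem 3 of \cite{arn} --- but your argument (coordinatewise unwinding of \eqref{eq:diagonal_representation} for the easy direction, and the decoupling observation that $|x-y|(i)=0$ forces $|T(x)-T(y)|(i)=0$ for the converse, plus the identification of the associate function with the optimal Lipschitz constants of the $f_i$) is exactly the natural argument behind that result. One remark worth keeping in mind: the decoupling step uses that the domain is all of $E$, so coordinates can be varied independently; this is consistent with the statement $T:E\to E$ and explains why the ``local'' version mentioned after the theorem requires additional hypotheses on the subset.
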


Note that, following Theorem \ref{coorwise_iff} and taking into account the last part of Example \ref{exdiag1}, we know that an operator can be diagonal (and therefore lattice Lipschitz) with respect to a given eigenvector basis $\mathcal B$, and not be diagonal with respect to another eigenvector basis $\mathcal D.$ In this case, using the theorem we obtain that the operator is not lattice Lipschitz with respect to the order defined by $\mathcal D.$

Although we do not use it in this paper, we should note that Theorem \ref{coorwise_iff}  allows a ``local'' version, i.e., it can be adapted for operators that are only defined on subsets of $E$ satisfying certain properties.  In its generality, this result opens the door to the approximation of Lipschitz operators by means of lattice Lipschitz extensions.  
Since having an eigenvector basis will play a central role in this, we will fix in the next section a method to deal with the (approximate) computation of an eigenvector basis in the case of Lipschitz operators. 
\begin{example}
The previous result can be directly applied to characterize lattice Lipschitz operators. Let us explain with an easy example how to do it. Take the
function $G: \mathbb R^2  \to \mathbb R^2$ given by the formula
$$
G(x,y)= \Big( x-y+ \frac{|y|}{1+|y|}, \frac{|y|}{1+|y|} \Big), \quad (x,y) \in \mathbb R^2,
$$
where $(x,y)$  are the coordinates with respect to the canonical basis, as well as the coordinates in the range. 

\begin{itemize}

\item[1)]
Let us compute the conditions for being eigenvectors of the map. The requirement (that has to be computed pointwise) is given by the equation
$$
\begin{vmatrix}
x-y+ \frac{|y|}{1+|y|} & x \\
 \frac{|y|}{1+|y|} & y \\
\end{vmatrix} 
= xy -y^2 +  \frac{y |y|}{1+|y|} - \frac{x |y|}{1+|y|} = 0.
$$
We directly get that this relation is satisfied for all vectors as $(x,0),$ $x \in \mathbb R,$ and
$$
x \Big(y- \frac{|y|}{1+|y|} \Big) = y \Big(y- \frac{|y|}{1+|y|} \Big),
$$
what gives $x=y.$ Therefore,  all vectors as $(x,0)$ and $(x,x),$ $x \in \mathbb R,$ are eigenvectors.

\item[2)]
Consider the basis $\mathcal B=\{ (1,0), (1,1) \},$ and  take  the coordinates of the vectors in this basis to be $(z,t).$
The  equation $(x,y)= z(1,0)+ t(1,1)$ gives the change of coordinates 
$$
x= z+t, \quad y=t.
$$
Therefore, the formula 
\begin{align*}
	G(z(1,0)+ t(1,1))
	& = G(z+t,t)= \Big( z+ \frac{|t|}{1+|t|}, \frac{|t|}{1+|t|} \Big) \\
	& = z(1,0) +  \frac{|t|}{t(1+|t|)} \, t \, (1,1),
\end{align*}
for $(z,t) \in \mathbb R^2,$ 
gives a diagonal representation for the function $G.$

\item[3)]
The functions $g_1(z)=z$ and $g_2(t)= |t|/(1+|t|)$ are real valued Lipschitz functions with constant  $1.$ It is obvious for $g_1.$ For $g_2,$ just note that for  all
$ t_1, t_2 \in \mathbb R,$
\begin{align*}
	\Big| \frac{|t_1|}{1+|t_1|} - \frac{|t_2|}{1+|t_2|} \Big| 
	& =	\Big|  \frac{|t_1|(1+|t_2|) }{(1+|t_1|)(1+|t_2|)} - \frac{|t_2|(1+|t_1|)}{(1+|t_1|)(1+|t_2|)} \Big| \\
	& = \frac{|t_1 - |t_2| }{(1+|t_1|)(1+|t_2|)} 
	\le \big| |t_1|- |t_2| \big| \le |t_1-t_2|,
\end{align*}
what means that the Lipschitz constant is less or equal to one, and doing $t_1 \to \infty$ and $t_2=0$ we see that this constant is $1.$
The associate function is then given by $K(1)=1$ and $K(2)=1.$
\end{itemize}

Summing up, all the vectors as $z(1,0)$ and $t(1,1)$ are eigenvectors, with eigenvalues
$e_1\big(z(1,0)\big)=1$ and $e_2\big(t(1,1)\big)= |t|/t(1+ |t|).$ Using Theorem  \ref{coorwise_iff}, we can say that $G$ is a lattice Lipschitz operator when the order is the one inherited by the basis $\mathcal B.$

\end{example}

%
%


\vspace{0.5cm}


\section{Estimates of the eigenvectors of a Lipschitz operator: the exponential distribution Monte-Carlo model}
\label{sec:approx_eigenvectors}


Following the arguments provided in the previous sections, we need to find sets of vectors that are approximately eigenvectors of any Lipschitz map we want to analyze. In this section we explain how to perform a statistical procedure to obtain such sets.
As we have seen, to know a basis of eigenvectors with respect to which a given Lipschitz map is  diagonal allows an easy representation of a lattice Lipschitz operator.
  However, we plan to establish an approximation procedure for a broad class of Lipschitz maps, so we cannot expect them to be lattice Lipschitz. We intend to use the same formulas that provide  representations of lattice Lipschitz maps, so we need to determine---at least, approximately--- the set  of eigenvectors of a given Lipschitz map in order to fix a convenient basis of the space, if possible.

To find a set of (approximated) eigenvectors we mix geometric and stochastic arguments.
Suppose that $E$ is a real Euclidean space, and recall that  its dual space can be directly identified with the original space $E$ and the dual action is the scalar product. Take a Lipschitz function $T:E \to E.$ 
Following the same ideas used in \cite[Section 2]{erd2}
 we define the \textit{diagonal value $\lambda_T(x)$} of $T(x)$ as the  real number that satisfies
$$
\lambda_T(x)= \big\langle T(x), \frac{x}{\|x\|^2} \big\rangle, \quad x \in X.
$$
We will simply write $\lambda(x)$ instead of $\lambda_T(x)$ if $T$ is fixed in the context.

Note that if $x$ is an eigenvector of $T$ with associated eigenvalue $\beta \in \mathbb R,$  the  real number $\lambda(x)$ defined as above gives the eigenvalue, that is,
$$
\lambda(x)= \big\langle T(x), \frac{x}{\|x\|^2} \big\rangle =\big\langle \beta \cdot x, \frac{x}{\|x\|^2} \big\rangle = \beta \frac{\langle x, x \rangle}{\|x\|^2} = \beta.
$$
The main property of the diagonal value is that it represents the optimal diagonal approximation to $T(x)$.  Furthermore, for the case of Euclidean spaces, the optimal diagonal approximation coincides with the projection of $T(x)$ on the unit vector in the direction of $x.$ We explicitly write this in  Proposition \ref{promin}, including the proof---a straightforward consequence of the Euclidean geometry---for the aim of completeness. 

Let us define before the \textit{diagonal projection error,}  $\varepsilon_T(x)(\alpha)$, as the function of $\alpha \in \mathbb R$ that represents the size of the difference between $T(x)$ and $\alpha \cdot x.$ After normalization, the formula for this quantity is given by
$$
\varepsilon_T(x)(\alpha)= \frac{1}{\|x\|} \big\| T(x) - \alpha \cdot x \big\|, \quad \alpha \in \mathbb R.
$$
We will write $\varepsilon_T(x)$ for the minimal value of the diagonal projection error  and, as in the case of $\lambda,$  $\varepsilon(x)$ if $T$ has been already fixed. 

\begin{proposition} \label{promin}
Let $X$ be a  Euclidean space, $T:X \to X$  a Lipschitz operator and $x \in X.$ Then the diagonal value $\lambda_T(x)=\lambda(x)$ is the real number that minimizes 
the diagonal  projection error. That is, the diagonal error is given by
$$
\varepsilon(x)= 
\varepsilon_T(x) \big(\lambda(x) \big)=  \frac{1}{\|x\|}  \, \Big\| T(x) - \lambda(x) \cdot x \Big\| =  \frac{1}{\|x\|} \, \min_{\alpha \in \mathbb R} \Big\| T(x) - \alpha \cdot x \Big\|.
$$
\end{proposition}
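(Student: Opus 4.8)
The plan is to reduce the claim to the elementary fact that in a Euclidean space the nearest point to a given vector on a one‑dimensional subspace is its orthogonal projection. Observe first that the Lipschitz hypothesis on $T$ plays no role whatsoever: once $x$ is fixed, $T(x)$ is simply a fixed vector of $X$, and we are minimizing, over $\alpha \in \mathbb R$, the distance from this vector to the line $\{\alpha x : \alpha \in \mathbb R\}$. We may harmlessly assume $x \neq 0$, since otherwise $\lambda(x)$ is not defined and the statement is vacuous.

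The first step is to expand the square of the unnormalized error as a quadratic polynomial in the real variable $\alpha$, using bilinearity and symmetry of the scalar product:
$$
\big\| T(x) - \alpha \cdot x \big\|^2 = \|x\|^2 \, \alpha^2 - 2 \langle T(x), x \rangle \, \alpha + \|T(x)\|^2 .
$$
Since the leading coefficient $\|x\|^2$ is strictly positive, this parabola has a unique global minimizer, obtained either by annihilating the derivative or by completing the square; in both cases it is $\alpha = \langle T(x), x\rangle / \|x\|^2$, which is exactly $\lambda(x)$ by definition of the diagonal value. Multiplying by $1/\|x\|^2$ and taking square roots then gives $\min_{\alpha \in \mathbb R} \varepsilon_T(x)(\alpha) = \varepsilon_T(x)\big(\lambda(x)\big)$, which is the asserted formula for $\varepsilon(x)$.

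A cleaner, coordinate‑free variant — and the one that makes transparent the ``optimal diagonal approximation / orthogonal projection'' interpretation quoted before the statement — is to check directly that the residual is orthogonal to $x$: one has $\langle T(x) - \lambda(x)\, x, \, x\rangle = \langle T(x), x\rangle - \lambda(x)\|x\|^2 = 0$, so $\lambda(x)\,x$ is the orthogonal projection of $T(x)$ onto $\mathrm{span}\{x\}$. Then for every $\alpha \in \mathbb R$ the Pythagorean identity yields $\|T(x) - \alpha x\|^2 = \|T(x) - \lambda(x) x\|^2 + (\lambda(x) - \alpha)^2 \|x\|^2 \ge \|T(x) - \lambda(x) x\|^2$, with equality precisely when $\alpha = \lambda(x)$. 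There is no real obstacle in this proof; the only points meriting a word of care are the exclusion of $x=0$ and the remark identifying $\lambda(x)\,x$ with the Euclidean projection, which is what justifies calling $\lambda(x)$ the best diagonal approximation to $T(x)$.
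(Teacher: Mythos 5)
Your proof is correct, and its first argument is essentially the paper's own: the paper likewise minimizes $\varepsilon_T(x)^2(\alpha)$ as a quadratic in $\alpha$, annihilating the derivative to obtain $\alpha=\langle T(x),x\rangle/\|x\|^2=\lambda(x)$ and checking the positive second derivative. Your coordinate-free variant via orthogonality of the residual and the Pythagorean identity is a genuine (and slightly cleaner) alternative the paper does not write out: it avoids calculus, identifies the minimizer as unique, and directly substantiates the remark preceding the proposition that $\lambda(x)\,x$ is the orthogonal projection of $T(x)$ onto the direction of $x$; your explicit exclusion of $x=0$ is also a point the paper leaves implicit.
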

\begin{proof} It is given by a direct computation. Note that the solution of the equation

\begin{eqnarray*}
\frac{\partial \varepsilon_T(x)^2(\alpha)}{\partial \alpha}& =&  \frac{1}{\|x\|^2}  \,  \frac{\partial}{\partial \alpha} \Big( \big\langle T(x) - \alpha \cdot x, T(x) - \alpha \cdot x \big\rangle \Big)
\\
&=&  \frac{1}{\|x\|^2} \, \frac{\partial}{\partial \alpha}  \Big( \|T(x)\|^2 -2 \alpha \langle T(x),x \rangle + \alpha^2 \|x\|^2 \Big)\\
&=&-  \frac{2}{\|x\|^2} \, \langle T(x),x \rangle +  \frac{2}{\|x\|^2} \,\alpha \|x\|^2 =0,
\end{eqnarray*}
is given by $\alpha = \langle T(x),x \rangle/\|x\|^2= \lambda(x).$ Since 
$$
 \frac{\partial^2  \varepsilon_T(x)^2(\alpha)}{\partial \alpha^2} = 2  \frac{1}{\|x\|^2} \, \|x\|^2 =2 >0, 
 $$ 
 we get the result. 
\end{proof}

\begin{corollary}
For every point $x\in E,$ the error $\varepsilon(x)$ is given by the formula
$$
\varepsilon(x) = \sqrt{\frac{\|T(x)\|^2}{\|x\|^2}- \lambda(x)^2}.
$$
\end{corollary}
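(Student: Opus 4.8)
The plan is to obtain this identity as an immediate algebraic consequence of Proposition \ref{promin}. By that proposition, the minimal diagonal projection error is attained at $\alpha = \lambda(x)$, so I would start from
$$
\varepsilon(x)^2 = \varepsilon_T(x)\big(\lambda(x)\big)^2 = \frac{1}{\|x\|^2}\,\big\| T(x) - \lambda(x)\cdot x \big\|^2,
$$
and expand the squared norm using the inner product as was already done inside the proof of Proposition \ref{promin}, namely
$$
\big\| T(x) - \lambda(x)\cdot x \big\|^2 = \|T(x)\|^2 - 2\lambda(x)\,\langle T(x), x\rangle + \lambda(x)^2\,\|x\|^2.
$$

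Next I would substitute the defining relation for the diagonal value. Since $\lambda(x) = \langle T(x), x\rangle / \|x\|^2$, we have $\langle T(x), x\rangle = \lambda(x)\,\|x\|^2$, and the middle term becomes $-2\lambda(x)^2\|x\|^2$. Collecting the two terms involving $\lambda(x)^2$ leaves $\|T(x)\|^2 - \lambda(x)^2\|x\|^2$, so that
$$
\varepsilon(x)^2 = \frac{\|T(x)\|^2 - \lambda(x)^2\|x\|^2}{\|x\|^2} = \frac{\|T(x)\|^2}{\|x\|^2} - \lambda(x)^2.
$$
Taking the (nonnegative) square root yields the claimed formula.

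There is essentially no obstacle here: the only point worth a remark is that the expression under the radical is guaranteed to be nonnegative, which is automatic since it equals $\varepsilon(x)^2 \ge 0$; equivalently, it is the Cauchy–Schwarz inequality $\langle T(x), x\rangle^2 \le \|T(x)\|^2\|x\|^2$ in disguise. I would include that one-line observation for completeness and otherwise present the computation as above.
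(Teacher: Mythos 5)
Your proposal is correct and follows essentially the same route as the paper: expand $\|T(x)-\lambda(x)\cdot x\|^2$ via the inner product, use $\langle T(x),x\rangle=\lambda(x)\|x\|^2$, and simplify. Your added remark on nonnegativity via Cauchy--Schwarz is a nice touch but the argument is otherwise identical.
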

\begin{proof} It is a consequence of  Proposition \ref{promin} and a direct calculation involving the definition of $\lambda(x)$. Indeed, we have that
\begin{align*}
\varepsilon(x)^2 &= \frac{1}{\|x\|^2} \Big| \langle T(x)-\lambda(x) \cdot x ,T(x)-\lambda(x) \cdot x  \rangle\Big|
\\
&=
\frac{1}{\|x\|^2} \left(  \|T(x)\|^2 - 2 \lambda(x) \langle T(x), x \rangle + \lambda(x) \, \|x\|^2    \right)\\
&= \frac{\|T(x)\|^2}{\|x\|^2} - \lambda(x)^2.
\end{align*}
\end{proof}

Using this result, the  idea  is to perform  a Monte Carlo procedure to obtain a large enough set of eigenvectors of the Lipschitz operator $T.$ It must be said that most of the effort for the design of Monte Carlo methods for operator diagonalization comes from quantum physics and stochastic analysis \cite{huss,lee,will,will2}. In general, these approaches do not provide a fundamental framework to support a general methodology, since they focus on the actual computation of quantities with some physical or mathematical meaning. Consequently, we propose in what follow our own procedure based on the functions defined just above.\\

We fix a suitable uniform value $\epsilon$ to be accepted for the error committed when the operator is approximated by its diagonal value. We will use such a set for the computation of the McShane and Whitney representation formulas, as a substitute of the exact set of eigenvalues that could not be computable.  An example of such  situation coincides with the case in which the Lipschitz operator $T$ is the addition of a linear diagonalizable operator $L$ plus a perturbation $P$ with small norm, $T= L+P.$
\\

Therefore, we will base our method on a sampling procedure supported by probabilistic arguments using the normal distribution and following the next steps.

\begin{itemize}
	
	\item[(1)] We start by fixing a bounded set in which we will search for our approximate eigenvectors; if no additional information is known, we choose a product $P$ of intervals in $\mathbb R^n$ centered in $0;$ in case we know previously that the eigenvectors are located in a particular set  $M,$ we use it instead. 
	
	\item[(2)] We use the uniform distribution to sample a starting set of vectors $S_0.$ In case we have some previous knowledge on the set, we can introduce  Bayesian procedures  to fix a more accurated probability distribution $\Psi$ for doing the sampling.
	
	\item[(3)] We compute the functional $\varepsilon(x)$---where $\varepsilon(\cdot)$ is given by the error formula provided in Proposition \ref{promin}---for all $x \in S_0.$
	Now, we consider the set $S_1$ of all the points $x$ that are most similar to an eigenvector in terms of having small $\varepsilon(x)$. This can be done by selecting the points with smallest $\varepsilon(x)$---for example, $10 \%.$ of the points of $S_0$---.
	
	\item[(4)] Now, for every $s \in S_1$, we start an iterative process. Sample a fixed number of points around $s$ using a gaussian distribution centered on $s$ and with variance depending of the error $\varepsilon(x)$ formula
	\begin{equation*}
		\psi(x)= \frac{1}{(\tau \, \varepsilon(s) \, \pi)^{n/2}} \, \exp \left( - \frac{|| a - x ||^2}{\tau \varepsilon(s)} \right),
	\end{equation*}
	where $\tau$ is a fitting parameter.
	Select the point with the smallest $\varepsilon(\cdot)$ and repeat this step a fixed number of times---changing the value of $\tau$ if necessary---. \\

	Taking into account that the only property we know about the function $T$ is that it is Lipschitz, this distribution allows to center the sampling near the points in which the diagonal error is small. By the Lipschitz inequality $\|T(x)-T(a)\| \le K \|x-a\|,$ $T(x)$ and $T(a)$ are controled by the distance between $x$ and $a$, so using the proposed distribution  maximizes the probability of getting approximated eigenvectors in the sampling. 
	
\end{itemize}

The computations involved in the algorithm whose scheme has been presented above are easy to perform using R. To show concrete situations, we will show some numerical examples of functions $T:\mathbb R^2 \to \mathbb R^2,$ since they allow a graphical representation.
The algorithm used can be found in the Appendix \ref{app:R_code} of this work.

\begin{example}
Consider the parametric family of functions $R_r: \mathbb R^2 \to \mathbb R^2,$ $r \in \mathbb R,$ given by the expression 
$$
R_r(x,y)=\Big(    8x + r \cdot \sin(5xy),  4x^2 + 4xy + y^2 - 2x - \frac{1}{5} \, \sqrt( |x+y|)  \Big), \quad (x,y) \in \mathbb R^2.
$$
The calculations shown below have followed the next rule. We consider the domain subset $[-5,5] \times [-5,5].$
We start with $N= 500$ initial points in the domain, which are obtained randomly.  We choose the $N_0=100$ best with respect to the error value. 
Using the  distribution $\psi$ written in step (3) of the algorithm with $\tau=5$, for each of these points we generate $N_1=10$ points around, from which the best one is selected. We repeat this step $10$ times. 

Let us show the graphical representation of the results with three different values of the parameter $r.$

\begin{itemize}

\item $r=0.$  The sine term in the first coordinate is eliminated. This makes the example simpler, with an easy to understand representation of a suitable subset of approximate eigenvectors. 

\begin{figure}[htpb]
	\centering
	\includegraphics[width=0.45\linewidth]{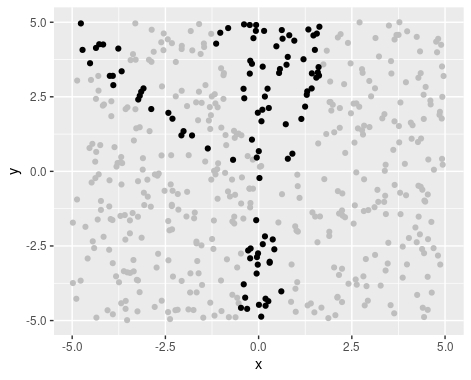}
	\hspace{0.3cm}
	\includegraphics[width=0.45\linewidth]{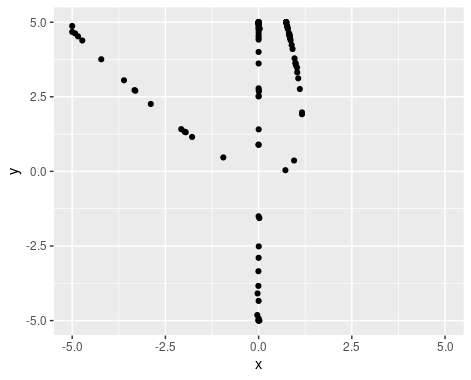}
	\caption{Left: first $500$ random points with the $100$ chosen points for $r=0$. Right: final set of approximated eigenvectors.}
	\label{fig:exampleini1}
\end{figure}

\item $r=3.$  In this case, the sine term in the first coordinate causes an important perturbation, producing a more dispersed eigenvector structure.

\begin{figure}[htpb]
	\centering
	\includegraphics[width=0.45\linewidth]{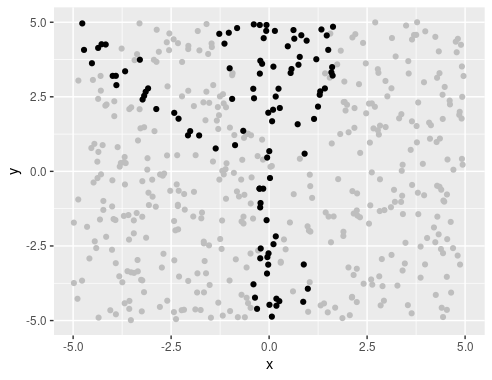}
	\hspace{0.3cm}
	\includegraphics[width=0.45\linewidth]{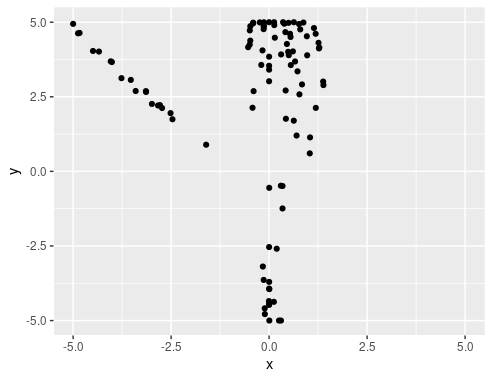}
	\caption{Left: first $500$ random points with the $100$ chosen points for $r=5$. Right: final set of approximated eigenvectors.}
	\label{fig:exampleini1}
\end{figure}

\item $r=-10.$  The sine term causes a stronger perturbation (of negative sign). The consequence is that the set of eigenvectors no longer follows (even approximately) clear lines.

\begin{figure}[htpb]
	\centering
	\includegraphics[width=0.45\linewidth]{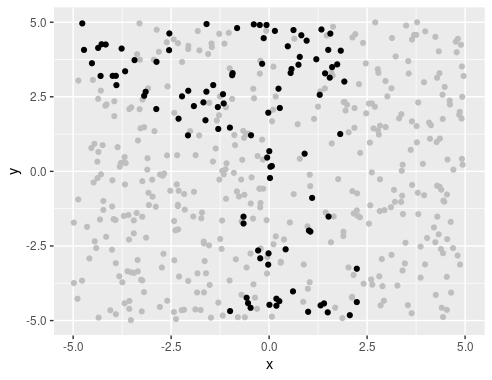}
	\hspace{0.3cm}
	\includegraphics[width=0.45\linewidth]{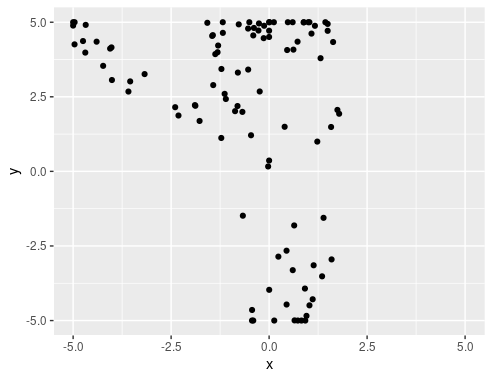}
	\caption{Left: first $500$ random points with the $100$ chosen points for $r=-10$. Right: final set of approximated eigenvectors.}
	\label{fig:exampleini1}
\end{figure}
\end{itemize}

\end{example}

These examples show that, although we cannot expect a diagonal distribution of eigenvectors for non lattice Lipschitz  maps, we can sometimes treat  general Lipschitz operators as  perturbations of lattice Lipschitz maps, in the case where some diagonal distribution of a set of approximate eigenvectors is still preserved. Since no clearly defined axes are given for such a set, we have to provide a technique for the generation of two straight lines (or $n$ straight lines in the general case) that can play this role. How to do so will be shown in the next section.

%
%
%

\section{General algorithm and examples}  \label{S4}

Let us show in this section how the extension/representation  formulas (McShane and Whitney versions) that work for the case of lattice Lipschitz operators can be adapted for any Lipschitz operator $T: S \subset E \to E$  in order to obtain an approximate functional expression for $T$ (Definition 2, Proposition 1 and  Proposition 2 in \cite{arn}). In the case of a lattice Lipschitz operator $L$, if we fix $S$ to be the union of the rays defined by a certain basis of eigenvectors of $L$ for $E,$ both of these formulas give exact representations of $L$ (Theorem 4 in \cite{arn}). That is, the operator $L$ coincides with both $(L|_S)^M$ and $(L|_S)^W.$ In case we consider $S$ an arbitrary subset of $E$, the McShane and Whitney formulas give approximations, for which the error expressions are known.
These adapted formulas are
$$
T^M(x)(w):=\bigvee \Big\{T(z)(w)- K(w) \vert x-z\vert (w): z \in S \Big\}, \quad x \in E,
$$
and
$$
T^W(x)(w):=\bigwedge\Big\{T(z)(w)+K(w) \vert x-z\vert (w): z \in S \Big\},  \quad x \in E.
$$
where each $w$ denotes the index on the corresponding element in the basis $\mathcal B,$ and the functions $K(w)$ is the pointwise evaluation of the Lipschitz constant for each coordinate.

Therefore, the use of these formulas
explicitly requires an order in space. Indeed, the expression $|s-v|(w)$ appearing in them is calculated using the order provided by a basis. In the next subsection we propose a method for defining such an order for the case of general Lipschitz maps.

\subsection{The definition of the order for the approximate representation of a Lipschitz operator}
\label{sec:order}

As we are designing an approximation method, the procedure  to find a good basis has to be related to the mass distribution of the set of approximate eigenvalues. The main idea consists in defining a partition of the set of approximate eigenvalues into $n$ sets that are intended to describe the mass distribution. The centres of mass of the subsets of the partition give the $n-$dimensional basis necessary for the definition of the lattice order.
Any clustering method could provide a technique for doing so. In this section we explain a method based on observing the mass distribution of the set of approximate eigenvectors obtained, from \textit{Principal Component Analysis} of the point cloud defined by this set. Fix an operator $T: \mathbb R^n \to \mathbb R^n.$
The proposed method follows the next steps, that give different solutions depending on the symmetry of the set of approximated eigenvectors  that are obtained.

\begin{itemize}
	
	\item[(1)]  The direct case: if  the  approximated eigenvectors are distributed around a set of $n$ vectors that are linearly independent, we take them as the adequate basis $\mathcal B.$
	
	\item[(2)] Otherwise,  we compute the PC (Principal Components) of the cloud of approximated eigenvectors of $T,$ that has been obtained. This technique is widely used for determined the main trends that can be detected in a point cloud (see for example \cite{abd,jol}).  We define the new (orthonormal) basis $\mathcal B$ using the PC.
	This is  a candidate for being a good basis in case the symmetry of the sets that define the distribution of mass of the eigenvectors coincide with the directions of the vectors provided by the PC.  The main problem is that this method always gives an orthogonal basis, which, as we have seen, is not necessarily the best way to describe the distribution of the eigenvector distribution, even if we have a lattice Lipschitz map.
	
	\item[(3)] Suppose now that the cloud of approximate eigenvectors is not oriented following the direction of any set of $n$ vectors, but can be found in a particular region of space. In this case we consider the octants (or hyperoctants) defined by the orthogonal basis provided by the PCA. We will choose a new basis $\mathcal C$ defined by the vectors crossing these octants along their axes of symmetry. That is,   take $\sigma$ to be any of the elements of $[-1,1]^n.$  We consider the vectors, expressed in the orthogonal basis provided by the PC, as
	$$
	c= \frac{\sigma}{\|\sigma\|} = \frac{ \big(\sigma(1),...,\sigma(n) \big)}{n^{1/2}}.
	$$
	For example, we get the first vector of  $\mathcal C$  to be $(1,1,1,...,1)/n^{1/2},$ and we  choose other $n-1$ vectors as the ones above to complete a basis. 
	
	
\end{itemize}

In the spirit of setting a concrete procedure for this article, we will follow the rule explained above for the definition of the order in the lattice in the next section. As we have said, this is not the only way that can be proposed to obtain such an order.  In general, any  rule for defining a suitable basis should depend on the symmetry of the problem.

\subsection{Weakening the lattice Lipschitz inequality}  \label{S4}

%
%
%
%
%
%
%
%
%

Although the definitions of the lattice versions of the McShane and Whitney extensions provide accurate results for diagonal Lipschitz maps, we cannot expect such good behavior for operators that are not exactly lattice Lipschitz.
If the mapping $T$ is not diagonalizable or the set of axes is not exactly determined, the assumptions of Theorem \ref{coorwise_iff} are not satisfied, so that $T$ may not be a lattice Lipschitz operator.
This makes that the associated function $K(w)$ may be too large, and therefore also the error of the approximation.\\

The solution we present for this problem is to change the condition \eqref{eq:lattice_lips_def} to
\begin{equation}
	\label{eq:lips_lattice_norm}
	\big| T(x) - T(y) \big|(i) \leq K(i) \big( (1 - \alpha) | x - y |(i) + \alpha \cdot || x - y ||  \big) , \quad 1 \leq i \leq n,
\end{equation}
where $0 \leq \alpha \leq 1$ and $|| \cdot ||$ is a norm on $\mathbb R^n$.
Writing it in terms of the order of the space $E,$ as 
\begin{equation*}
	| T(x) - T(y) | \leq K \big( (1-\alpha) | x - y | + \alpha || x - y || \cdot \textbf{1} \big).
\end{equation*}
where $\bf{1}$ denotes the constant function one in $\Omega$.\\

Observe that if $\alpha = 0$, the condition \eqref{eq:lips_lattice_norm} is the same as \eqref{eq:lattice_lips_def} and, if $\alpha = 1$, it is equivalent to every  coordinate function $T_i$  of $T$  being a real Lipschitz function.
In the case that $T = L + P$ is the addition of a lattice Lipschitz operator $L$ plus a perturbation $P$, which we assume to have a small Lipschitz constant, $T$ satisfies
\begin{equation*}
	| T(x) - T(y) | \leq \left( \frac{K}{1-\alpha} + \frac{C}{\alpha} \right) \cdot \big( (1-\alpha) | x - y | + \alpha || x - y || \cdot \textbf{1} \big),
\end{equation*}
where $K$ is the associated function of $L$ and $C$ the Lipschitz constant of $C$. Note that, to control the function $K$ of \eqref{eq:lattice_lips_def}, $\alpha$ can be smaller the smaller is the perturbation $P$.

If a function defined on a subset $S$ of $\mathbb R^n$, $T : S \to \mathbb R^n$ satisfies \eqref{eq:lips_lattice_norm}, we can also define the McShane and Whitney extensions as
\begin{align*}
	T^M(x)(i) & := \bigvee \big\{ T(z)(i) - K(i) \big( (1-\alpha) |x-z|(i) + \alpha || x - z || \big) : z \in S \big\}, \\
	T^W(x)(i) & := \bigwedge \big\{ T(z)(i) + K(i) \big( (1-\alpha) |x-z|(i) + \alpha || x - z || \big) : z \in S \big\}.
\end{align*}
Following the arguments presented in \cite{arn}, we find that the error bounds become in this case
\begin{equation}
	\label{eq:bounds_with_alpha}
	\begin{split}
		- 2 K \bigwedge \{ (1-\alpha) | x - z | + \alpha || x - z || \cdot \textbf{1} : \, z \in E_0 \} \leq (T|_{E_0})^M(x) - T(x) \leq 0 \\
		0 \leq (T|_{E_0})^W(x) - T(x) \leq 2 K \bigwedge \{ (1-\alpha) | x - z | + \alpha || x - z || \cdot \textbf{1} : \, z \in E_0 \}.
	\end{split}
\end{equation}
The idea now is to use the previous method to approximate a function, using for doing so a small $\alpha > 0$.

\subsection{General algorithm} \label{S4.3}
Using the formulas explained, we apply the complete procedure following the steps below.

\begin{itemize}
	
	\item[(1)] Fix an operator $T: \mathbb{R}^n \to \mathbb{R}^n$ that one wants to reconstruct from its eigenvectors. If the set of eigenvectors is known, sample some points in it, calculate the value of $T$ and go to the next step. Otherwise, the eigenvector set is approximated by a Monte Carlo procedure based on diagonal error minimization as is explained in Section \ref{sec:approx_eigenvectors}. Call $S$ the set of ---approximate--- eigenvectors.
	
	\item[(2)] For fixing the order on $\mathbb R^n$, use the method exposed on Section \ref{sec:order} based on PCA ---other procedures could also be possible---.
	
	\item[(3)] Fix a small $\alpha > 0$ (for example $\alpha = 0.1$) and compute the best $K(w)$ possible by using the formula
	\begin{equation*}
		K(w) = \max \left\{ \dfrac{ | T(x) - T(y) |(w) }{ (1 - \alpha) | x - y |(i) + \alpha \cdot || x - y || } : \, x, y \in S, \, x \neq y \right\},
	\end{equation*}
	and the McShane and Whitney lattice-type formulas associated to the order provided in the previous step. 
	
	\item[(4)] An interpolation of these formulas provide a(n approximate) representation of the original operator $T.$ We control the error commited by these formulas by using the error associated to the Lipschitz inequality and the formulas in (\ref{eq:bounds_with_alpha}), together with the diagonal error of the approximation of the set of eigenvectors. 
	
\end{itemize}

\section{A numerical example} \label{S5}

Let us show how the  method explained in Section \ref{S4}   works in a concrete numerical example. Let $E = \mathbb R^2$ and consider the function $f: \mathbb R^2 \to \mathbb R^2$ defined as the function  $(x,y) \mapsto (x+y, x-y)$ (which is a diagonal map, and a lattice Lipschitz operator), with a small perturbation: 
\begin{equation*}
	f(x,y) = \left( x + y + \frac{1}{5} \sin(10x) + \frac{xy}{100}, x - y - \frac{1}{10} \cos(x+5y) \right).
\end{equation*}

First of all, we approximate the set of eigenvectors with the Monte Carlo method explained above. Let $S_0$ be a sample $250$ points using a uniform distribution on the set $P = [-5,5] \times [-5,5]$. Compute the diagonal error $\varepsilon(\cdot)$ at each point of $S_0$, and select the $50$ of them with smallest error ($20\%$ of the points on $S_0$). Write $S_1$ for this set. Now, for each element $x$ on $S_1$, we start an iterative process to find a set of approximated eigenvectors as  explained before. After $5$ iterations, the result (the set $S_2$) is plotted in Figure \ref{fig:example_1_S}.

\begin{figure}[htpb]
	\centering
	\includegraphics[width=0.45\linewidth]{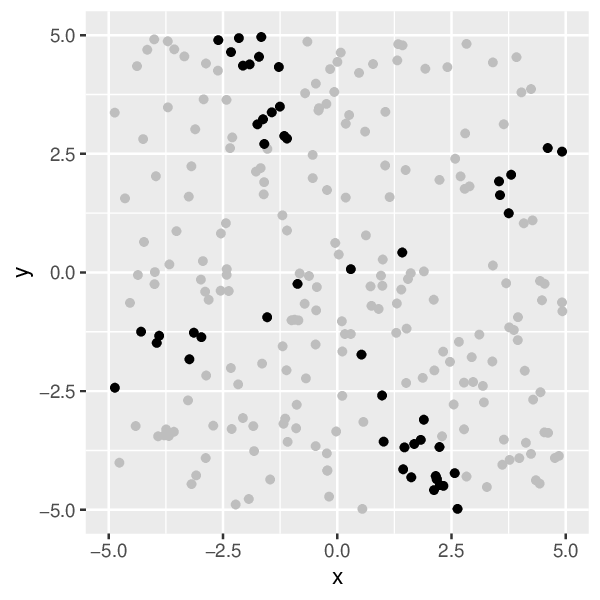}
	\hspace{0.4cm}
	\includegraphics[width=0.45\linewidth]{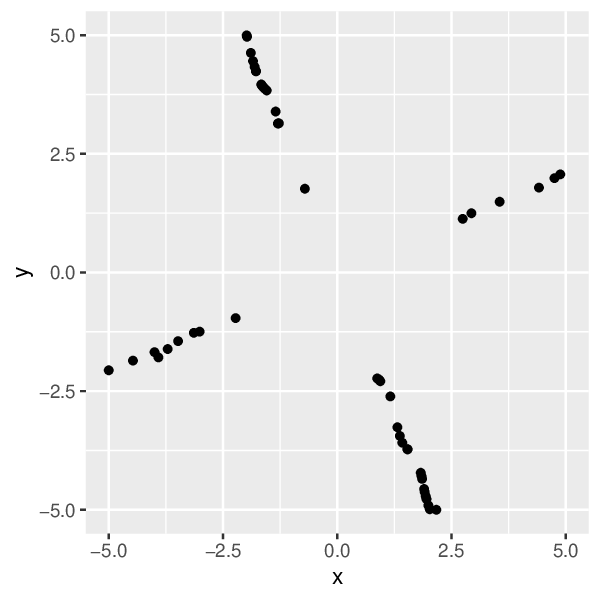}
	\caption{On the left, first approach to the set of eigenvectors, $S_0$ in gray and $S_1$ in black.  The set $S_2$ obtained after $5$ iterations can be seen on the right.}
	\label{fig:example_1_S}
\end{figure}

In the next step we choose the axes that will define the lattice structure of $E$. In this case, we apply   Principal Component Analysis (PCA)  \cite{abd}. The resulting new axes can be seen in  Figure \ref{fig:example_1_PCA}. These axes "look good" because they have a distribution similar to that of the true eigenvectors.

\begin{figure}[htpb]
	\centering
	\includegraphics[width=0.5\linewidth]{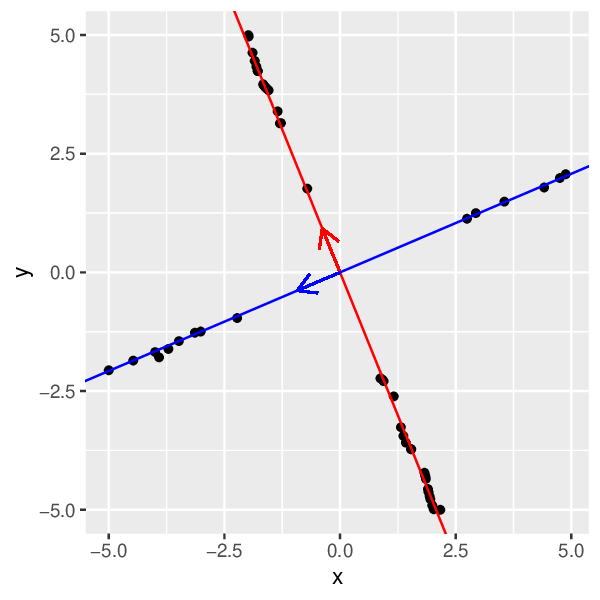}
	\caption{The new axes resulting from PCA; in red, the first principal component and, in blue, the second one.}
	\label{fig:example_1_PCA}
\end{figure}

The final step is to compute the McShane and Whitney formulas (with $\alpha = 0.1$ and the Euclidean norm $|| \cdot ||$), using the points of $S_2$ and the order given by the orthonormal basis provided by the PCA. The best function $K$  is in this case
\begin{equation*}
	K(1) = 2.24, \quad K(2) = 3.26.
\end{equation*}
Observe that if the norm modification is not considered on the lattice Lipschitz inequality ($\alpha = 0$), the best $K$ possible is much larger: $K(1) = 16.2, K(2) = 220.4$, which would cause a worst approximation.
The approximation result computed as the mean value of the McShane and Whitney formulas,
\begin{equation*}
	\widehat{f}(x,y) = \dfrac{f^M(x,y) + f^W(x,y)}{2},
\end{equation*}
can be seen in Figure \ref{fig:example_1_ap}.

\begin{figure}[htpb]
	\centering
	\includegraphics[width=0.35\linewidth]{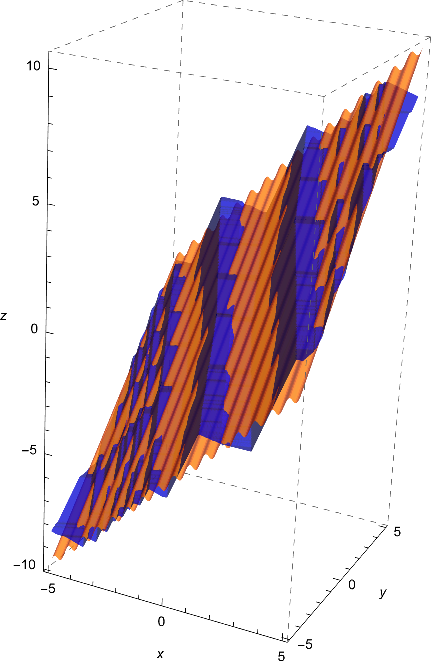}
	\hspace{1.0cm}
	\includegraphics[width=0.35\linewidth]{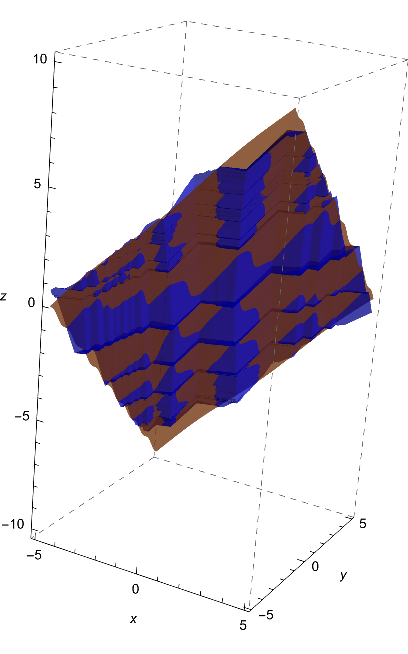}
	\caption{Approximation $\widehat{f}$ (in blue) of $f$ (in orange)---left:  first component, right: the second one---.}
	\label{fig:example_1_ap}
\end{figure}

In order to compare our approximation with the original $f$, we compute the error using a Monte-Carlo procedure,  and we obtain
\begin{equation*}
	\frac{1}{\mu(P)} \big\| \big( \| \widehat{f} - f \| \big) \big\|_2
	= \frac{1}{100} \left( \int_P \| \widehat{f} - f \|^2 dx \right)^\frac{1}{2}
	\approx 0.65.
\end{equation*}

The pointwise error is bounded; using the formulas  \eqref{eq:bounds_with_alpha} we obtain
\begin{align*}
	\big| \widehat{f}(x,y) - f(x,y) \big|(i) \leq K(i) \bigwedge_{(z,t) \in S_2} \Big( 0.9 (|x-z|,|y-t|) (i) + 0.1 \| (x,y) - (z,t) \|_2 \Big),
\end{align*}
for each component, $i = 1, 2$.

\section{Conclusions} \label{con}

The notion of lattice Lipschitz operator in finite-dimensional normed spaces has been introduced to provide a suitable set of Lipschitz-type operators that can be used for the design of approximation algorithms. Since any lattice Lipschitz operator always allows a diagonal representation, the family of functions to which this approximation method is applied is composed of nonlinear functions with the property that they allow an ``almost diagonal'' representation.

This makes it necessary to find an approximation method to find the ``almost eigenvalues'' of the objective function, and, in a second step, to determine a good set of lattice Lipschitz maps that can be used as an approximation family for it.  We propose a concrete algorithm, and show with an example how it works, taking into account the measure of the error made when using the approximation, whose formulas have also been obtained in the paper.


\appendix

\section{R code}
\label{app:R_code}

\begin{lstlisting}{}

library(ggplot2)
library(tidyverse)

ev_lambda <- function(v, f){
  la = ifelse(all(v==c(0,0)), 0, sum(f(v)*v)/sum(v^2) )
  return(la)
}

ev_error <- function(v, f){
  ev_la <- ev_lambda(v, f)
  if(all(v==c(0,0))){
	return(0)
  }else{
	return(sqrt(sum((f(v)-ev_la*v)^2))/sqrt(sum(v^2)))
  }
}

aprox_eigen_R2 <- function(FUN, range = 1, N, N0, N1, tau, steps = 1){
  # FUN: the original function
  # range: work on the rectange [-range,range]^2
  # N: number of points to start the first time
  # N0: number of points (of the N) to select
  # N1: number of points to generate
  #   of each of N0 to select the best
  # tau: the factor of the variance for the gaussian
  # steps: steps to repeat
  # output: a list with all the information
  
  res <- list(S=list())
  
  # STEP 0
  df <- data.frame(
	x = runif(N, min=-range, max=range),
	y = runif(N, min=-range, max=range)
  ) %>% 
	mutate(ev_err=map2_dbl(x, y,
		function(x,y){ ev_error(c(x,y), FUN)})) %>% 
    arrange( ev_err )
  res$S00 <- df
  
  res$plot_step_0 <- ggplot() +
    geom_point(data=df, mapping=aes(x, y), color="gray") +
    geom_point(data=df[1:N0,], mapping=aes(x, y), color="black") +
    coord_cartesian(xlim=c(-range, range), ylim=c(-range, range)) 
  
  df <- df[1:N0,]
  res$S0 <- df
  
  # STEPS >= 1
  for(s in 1:steps){
	for(i in 1:N0){
	  x = c(df$x[i], rnorm(
	    N1, mean=df$x[i], sd=tau*df$ev_err[i]))
	  y = c(df$y[i], rnorm(
	    N1, mean=df$y[i], sd=tau*df$ev_err[i]))
	  ev_err <- map2_dbl(x, y,
	    function(x,y){ev_error(c(x,y),FUN)})
	  ind <- ev_err %>% which.min()
	  if(x[ind]>range){x[ind]=range}
	  if(x[ind]<-range){x[ind]=-range}
	  if(y[ind]>range){y[ind]=range}
	  if(y[ind]<-range){y[ind]=-range}
	  df[i,] <- c(x[ind], y[ind], ev_err[ind]) %>%
	  data.frame() %>% t()
    }
    res$S[[s]] <- df
  }
	
  res$eigenvec <- df %>%
  select(x, y) %>% 
  mutate(lambda = map2_dbl(x, y, ~ev_lambda(c(.x,.y), FUN)))
  res$mean_err <- mean(df[, 'ev_err'])
  res$plot_step_end <- ggplot() +
  geom_point(data=df, mapping=aes(x, y), color="black") +
  coord_cartesian(xlim=c(-range, range), ylim=c(-range,range)) 
  return(res)
  
}

### EXAMPLE ------ REF ------

R0 <- function(v) {
  x = v[1]
  y = v[2]
  return( c(8*x, 4*x^2 + 4*x*y +y^2 - 2*x - 1/5*sqrt(abs(x+y))) )
}

ev <- aprox_eigen_R2(FUN = R0, range = 5, N = 500, N0 = 100,
                     N1 = 10, tau = 5, steps = 10)
ev$plot_step_end

\end{lstlisting}

\vspace{1cm}

\textbf{Funding:} The first author was supported by a contract of the Programa de Ayudas de Investigación
y Desarrollo (PAID-01-21), Universitat Politècnica de València.

\vspace{1cm}





\end{document}